\numberwithin{equation}{subsection}
\newtheorem{propo}{Proposition}[section]
\newtheorem{corol}[propo]{Corollary}
\newtheorem{theor}[propo]{Theorem}
\newtheorem{lemma}[propo]{Lemma}
\theoremstyle{definition}
\theoremstyle{remark}
\let\oldmarginpar\marginpar
\renewcommand\marginpar[1]{\oldmarginpar{\footnotesize #1}}
\newcommand{\card}{\operatorname{card}}
\newcommand{\id}{\operatorname{id}}
\begin{document}

\title[Strings in metric spaces]{Strings in metric spaces}

    \author[Vladimir Turaev]{Vladimir Turaev}
    \address{Vladimir Turaev \newline
    \indent   Department of Mathematics \newline
    \indent  Indiana University \newline
    \indent Bloomington IN47405, USA\newline
    \indent $\mathtt{vturaev@yahoo.com}$}

\begin{abstract}  We introduce
strings in metric spaces and define string complexes of  metric spaces. We describe the class of 2-dimensional topological spaces which arise in this way from finite metric spaces. \end{abstract}

\maketitle

\section {Introduction}

 Classical constructions due to \u{C}ech and Vietoris-Rips  associate certain simplicial complexes with metric spaces. The study of these complexes 
yields various invariants of  metric spaces including their \u{C}ech and Vietoris-Rips homology. This area of research has been  active recently in the context of topological data analysis, see \cite{JJ}, \cite{RB}.

We  discuss a different method deriving a simplicial complex from a metric space.  This method is based on the study of  points  for which the  triangle inequality is an equality.  Our construction is formulated in terms of so-called strings which we view as metric analogues of straight segments. We call the resulting simplicial complexes  the string  complexes. Homology of the string complex may be a useful invariant of the original metric space.

 Our main result yields necessary and sufficient conditions on a 2-dimensional topological space to be  homeomorphic to the string complex of a finite metric space. In particular, we show that all compact surfaces arise in this way.
 
 For the sake of generality, we define and  study strings in so-called gap spaces which generalize metric spaces by dropping all conditions on the metric except the triangle inequality.


  \section{Strings in gap spaces}\label{strings}

 \subsection{Gap spaces}\label{topophlsett} Given a set~$X$, we call a mapping $d:X\times X\to \Bbb R$ a \emph{gap function} if  it obeys the triangle inequality: $d(x,y)+d(y,z)\geq d(x,z)$ for all $x,y,z\in X$.  Such a pair $(X,d)$ is    a \emph{gap space}, and the number $d(x,y)$ is  the \emph{gap} between $x, y \in X$. 
   A repeated application of the triangle inequality  shows that for any  $n\geq 3$ points $x_1,x_2, ..., x_n\in X$   we have  the \emph{$n$-gon inequality}
   \begin{equation}\label{wo3}  d(x_1, x_{2})+d(x_2, x_{3})+\cdots+d(x_{n-1}, x_{n})\geq d(x_1, x_n).
     \end{equation} 
     
     A standard source of gap functions are road maps with some of the roads one-way and other roads two-way.  Here~$X$ is the set of all crossings of the roads  and the gap between any two crossings is the length of the shortest path leading from the first crossing to the second. We have to require that the map allows to travel from any crossing to any other crossing along the roads.

\subsection{Strings}\label{topolsett}  
A  \emph{string of  length $n\geq 3$}  in a gap space $(X,d)$ is an $n$-element  set $S \subset X$ whose elements   can be ordered  $x_1, x_2, \ldots, x_{n}$ so that  \eqref{wo3} is an equality:
 \begin{equation}\label{wo35} d(x_1, x_{2})+d(x_2, x_{3})+\cdots+d(x_{n-1}, x_{n}) =d(x_1, x_{n}).\end{equation}
Any order  in~$S$ satisfying  \eqref{wo35} is said to be \emph{direct}. A string endowed with a direct order is  \emph{ordered}.

It is convenient to extend these notions to   1-element and 2-element subsets of the gap space~$X$. For $x\in X$,  a 
 $1$-element set $\{x\} $ is  a \emph{string of  length~$1$}  
if~$x$ belongs to  a string of length~3 in~$X$. The only order in the set $\{x\} $ is direct.
Similarly,  a  $2$-element subset of~$ X$  is a \emph{string of  length~$2$}  
if it is contained (as a subset) in a  string of length~3 in~$X$.   
An order in a string of length~2 is  \emph{direct} if it extends to a direct order in a string of length 3. (We agree that the order $x,y,z$ in the 3-element set $\{x,y,z\}$ is an extension of 
 the order $x,y$ in the set $\{x,y\}$,  the order $x,z$ in the  set $\{x,z\}$, and  the order $y,z$ in the set $\{y,z\}$.)

\begin{lemma}\label{le+1}  Each  non-void subset of a string    is itself a string (called a substring). A direct order in a string restricts to a direct order in each substring. \end{lemma}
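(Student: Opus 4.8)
The plan is to reduce everything to one computational fact about direct orders, which I will call interval additivity: if $x_1, x_2, \ldots, x_n$ is a direct order on a string (so that \eqref{wo35} holds), then for all indices $1 \le p < q \le n$ one has $\sum_{k=p}^{q-1} d(x_k, x_{k+1}) = d(x_p, x_q)$. To prove this I split the full chain $x_1, \ldots, x_n$ at $x_p$ and $x_q$. The $n$-gon inequality \eqref{wo3} bounds each of the three partial edge-sums from below by $d(x_1,x_p)$, $d(x_p, x_q)$, and $d(x_q, x_n)$ respectively, omitting the first or last block when $p = 1$ or $q = n$ so that no diagonal term $d(x,x)$ ever intrudes (recall that a gap function need not vanish on the diagonal). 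On the other hand, \eqref{wo3} applied to the four points $x_1, x_p, x_q, x_n$ gives $d(x_1, x_n) \le d(x_1, x_p) + d(x_p, x_q) + d(x_q, x_n)$. Since the total edge-sum equals $d(x_1, x_n)$ by \eqref{wo35}, stringing these inequalities together sandwiches $d(x_1,x_n)$ between itself, forcing every inequality to be an equality; the middle one is precisely interval additivity. This sandwich is the heart of the proof, and I note that it never uses symmetry of $d$, as befits the gap-space setting.

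With interval additivity in hand, I would first handle substrings of length $\ge 3$. Let $T = \{x_{i_1}, \ldots, x_{i_m}\}$ with $i_1 < \cdots < i_m$ and $m \ge 3$, equipped with the order induced from the direct order on the ambient string. Interval additivity applied to each consecutive pair gives $d(x_{i_\ell}, x_{i_{\ell+1}}) = \sum_{k=i_\ell}^{i_{\ell+1}-1} d(x_k, x_{k+1})$; summing over $\ell$, the right-hand sides telescope to $\sum_{k=i_1}^{i_m - 1} d(x_k, x_{k+1})$, which equals $d(x_{i_1}, x_{i_m})$ by interval additivity once more. Thus $\sum_{\ell=1}^{m-1} d(x_{i_\ell}, x_{i_{\ell+1}}) = d(x_{i_1}, x_{i_m})$, so the induced order is direct and $T$ is a string of length $m$.

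The short substrings are then disposed of by appealing to the conventions for lengths $1$ and $2$. Assume first that the ambient string $S$ has length $\ge 3$. For a two-element subset $T = \{x_i, x_j\}$ with $i < j$, choose any third index $k$; by the previous paragraph the three-element set $\{x_i, x_j, x_k\}$ is a string of length $3$ containing $T$, so $T$ is a string of length $2$, and since its induced order places $x_i$ before $x_j$ it extends the induced order $x_i, x_j$, which is therefore direct. For a one-element subset $\{x\}$, the point $x$ lies in some three-element substring, a length-$3$ string, so $\{x\}$ is a string of length $1$. It remains to cover the degenerate ambient cases: if $S$ has length $1$ there is nothing to prove, and if $S$ has length $2$ then by definition $S$ sits inside a length-$3$ string, whence each element of $S$ lies in a length-$3$ string and each singleton is a string of length $1$.

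The only genuine obstacle is the interval-additivity sandwich; the remainder is careful bookkeeping, the one delicate point being to match the stated convention for when a three-element order extends a two-element order.
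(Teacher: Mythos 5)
Your proof is correct, and it reorganizes the argument in a way that differs from the paper's. The paper proceeds by downward induction on the size of the subset: for a direct order $x_1,\ldots,x_n$ it runs the sandwich argument three times (dropping the last point, the first point, or a middle point $x_j$), concludes that every $(n-1)$-element subset is a string with the induced order, and iterates; the interval formula \eqref{wo116} is then derived afterwards as a corollary of the lemma. You invert this logical order: you prove \eqref{wo116} (your ``interval additivity'') directly, by sandwiching $d(x_1,x_n)$ between the total edge-sum and the lower bound $d(x_1,x_p)+d(x_p,x_q)+d(x_q,x_n)$ coming from the block-wise $m$-gon inequalities, and then obtain every substring of size $\geq 3$ in one telescoping step with no induction. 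The underlying mechanism --- a chain of inequalities pinched to equalities by \eqref{wo35} --- is the same in both proofs, but your version is non-inductive, handles an arbitrary subset in a single pass, and delivers \eqref{wo116} for free; the paper's version is slightly more economical per step since it only ever compares adjacent chain lengths. Your attention to the gap-space subtleties is also correct and worth keeping: you rightly avoid ever invoking a term $d(x,x)$ (which need not vanish for a gap function) by omitting empty blocks when $p=1$ or $q=n$, and you match the paper's convention that the order $x,y,z$ on a triple extends each of the three induced orders on pairs, which is exactly what is needed to certify directness for substrings of length $2$.
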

  
\begin{proof}  Let  $(X,d)$ be a gap space. Let $S\subset X$ be a string of length~$n$ with direct order  $S=\{x_1,..., x_n\}$.   For $n=1,2,3$,  both claims of the lemma concerning the subsets of~$S$ follow directly from the definitions.   Assume   that $n\geq 4$. 
 We have  \begin{equation}\label{wo309}d(x_1, x_{2})+d(x_2, x_{3})+\cdots+d(x_{n-2}, x_{n-1})+d(x_{n-1}, x_{n})
  \end{equation}
  $$\geq 
 d(x_1, x_{n-1})+d(x_{n-1}, x_{n})\geq d(x_1, x_n)$$
 where we use the $(n-1)$-gon inequality and  the triangle inequality.  Since $x_1,..., x_n$ is a direct order in~$S$, Formula   \eqref{wo35} implies that both inequalities in \eqref{wo309}  are equalities. The first of these equalities implies that the set  $\{x_1,..., x_{n-1}\}$  is a string with direct order  $x_1,..., x_{n-1}$.      Similarly,  we have $$d(x_1, x_{2})+d(x_2, x_{3})+\cdots+d(x_{n-1}, x_{n}) \geq 
 d(x_1, x_{2})+d(x_{2}, x_{n})\geq d(x_1, x_n). $$
 Again,  Formula \eqref{wo35} implies that both these inequalities are equalities. The first of them means that the  set $\{x_2,..., x_n\}$ is  a string with direct order  $x_2,..., x_n$.     Now, pick any $j\in \{2,..., n-1\}$. We have $$d(x_1, x_{2})+d(x_2, x_{3})+\cdots +d(x_{n-1}, x_{n})$$
 $$ = \sum_{k=1}^{j-2} d(x_k, x_{k+1})+d(x_{j-1}, x_{j})
 +d( x_{j}, x_{j+1})+\sum_{k=j+1}^{n-1} d( x_{k}, x_{k+1}) \geq 
 $$
 $$\geq   \sum_{k=1}^{j-2} d(x_k, x_{k+1})+d(x_{j-1},  x_{j+1}) +\sum_{k=j+1}^{n-1} d( x_{k}, x_{k+1} )
\geq d(x_1, x_n) $$
 where we  use the triangle inequality and   the  $(n-1)$-gon inequality.
 Since $x_1,..., x_n$ is a direct order in~$S$, both inequalities here have to be equalities. The second  of them  shows that the set   $\{x_1,...,x_{j-1}, x_{j+1},..., x_n\}$ is a string with direct order  $x_1,...,x_{j-1}, x_{j+1},..., x_n$.    This proves
 both claims of the lemma for all subsets  of~$S$ having  $n-1$ elements. Proceeding by induction,
 we obtain both claims for all non-void subsets of~$S$. 
\end{proof} 

The gaps between the points of an ordered string    $x_1,..., x_n$ of length $n\geq 3$ can be computed by the formula \begin{equation}\label{wo116} d(x_i, x_j)=d(x_i, x_{i+1})+d(x_{i+1}, x_{i+2})+\cdots+d(x_{j-1}, x_{j})\end{equation} for any  $1\leq i<j\leq n$.
 Indeed, by Lemma~\ref{le+1}, the sequence $x_i, x_{i+1},..., x_j$
is  an ordered string. Hence, we have \eqref{wo116}.


\subsection{The case of metric spaces}  More can be said about  strings in  metric spaces.
 A gap space  $(X, d:X\times X\to \Bbb R)$ is   a metric space (and~$d$ is a metric) iff $d(x,y)=d(y,x)\geq 0$   for all $x,y\in X$ and   $d(x,y)=0 \Longleftrightarrow x=y$.   
If $x_1,  \ldots, x_{n}$
is a direct order in a string in a metric space~$X$  with $n\geq 2$ then the opposite order $x_n, x_{n-1},..., x_1$
in the same string  also is direct.   Formula~\eqref{wo116} implies that  $x_1, x_n$ are the points of this string with the greatest distance between them. Therefore the unordered pair of points $x_1, x_n$ depends only on the string and not on the direct order in it. The points $x_1, x_n$  are called the \emph{endpoints} of the string.   Using these observations  and the induction on~$n$, one easily concludes that   in a metric space,  every string   of length $\geq 2$ has exactly two direct orders, and they are opposite to each other.

Strings in  metric spaces are unstable in the sense that a slight deformation of the metric can destroy    strings. For example, increasing all distances between  different points by the same positive number we obtain a new metric having no strings. On the other hand, every finite metric space can be \emph{trimmed} (see \cite{Tu1}, \cite{Tu2}) to produce a metric space $(X,d)$ with the following  property: for any $y\in X$ there are $x,z\in X\setminus \{y\}$ such that $x\neq  z$ and  $d(x,y)+d(y,z)=d(x,z)$. 
 Consequently,  all points of~$X$ lie on strings.

  \section{The string space}\label{stringcom}

\subsection{Basics}\label{com} Recall that an abstract simplicial complex~$C$ is a collection of non-empty finite subsets of a certain set such that for every element of the collection~$C$, all its non-empty subsets  also belong to~$C$. By Lemma~\ref{le+1},  the collection of strings in any gap space $(X,d)$ is an abstract simplicial complex. We  denote its geometric realization  by $\vert X,d \vert$.  
This is a topological space glued from simplexes numerated by the strings in~$X$. The simplex numerated by a string of length $n\geq 1$ has~$n$ vertexes and  dimension $n-1$.
The simplexes corresponding to smaller strings are faces of  the simplexes corresponding to bigger strings. We call $\vert X,d\vert$ the \emph{string space} of  $(X,d)$.
 By the very definition, the space  $\vert X,d\vert$ 
 admits a triangulation whose all vertexes (0-simplexes) and  edges (1-simplexes) are faces of certain 2-simplexes. The usual techniques of 
 algebraic topology - homology, cohomology, homotopy groups etc.\ -  apply to the  space  $\vert X,d\vert$ 
  and produce  interesting algebraic objects
associated with the gap space  $(X,d)$. In particular, all these constructions apply to metric spaces. 



\subsection{Examples}\label{exa} 1. Let a metric~$d$ in a set~$X$ be defined by  $d(x,y)=1$ for all $x \neq y$ and $d(x,x)=0$ for all $x\in X$. Then  $(X,d)$ has no strings and  $\vert X,d\vert=\emptyset$.

2. Let $X\subset \Bbb R$ be a finite set of real numbers with  metric induced by the standard metric in~$\Bbb R$. Suppose that $\card (X)\geq 3$. Then all non-void subsets of~$X$ are   strings. The string space  of~$X$ is a  simplex of dimension  $\card (X)-1$.

 
 
 3. Let $X\subset \Bbb R^2$ be a set consisting of $m\geq 3$ points lying on a straight line in $\Bbb R^2$  and $n\geq 3$ points lying 
on a parallel line. The metric in~$X$ is induced by the Euclidean metric  in $\Bbb R^2$. Then the string space of~$X$ is the disjoint union of two simplexes, one of dimension $m-1$ and one of dimension $n-1$. 

4. Consider  a convex polygon $P\subset \Bbb R^2$ with $k\geq 3$ vertexes and all interior angles  $<180^\circ$.  Consider a finite set $X\subset P$ including all vertexes of~$P$ and at least one point  inside each edge of~$P$.  The metric in~$X$ is the usual Euclidean metric. Then the string space $\vert X\vert$ is formed by~$k$ simplexes  $T_1,..., T_k$. If $\{e_i \,\vert \, i \in \Bbb Z/k\Bbb Z\}$ are the cyclically numerated  edges of~$P$ then  $\dim(T_i)=\card (X\cap e_i)-1\geq 2$. For all~$i$, the simplexes $T_i, T_{i+1}$ meet in one common vertex; otherwise,  the simplexes $T_1,..., T_k$ do not meet. Note  that the string space $\vert X\vert$ is homotopy equivalent to the circle.


5. Let $X\subset \Bbb R^2$ be a 4-element set formed by the vertexes of a planar rectangle. To define a metric in~$X$, we inscribe~$X$ in a circle in $\Bbb R^2$ and assign to any two points of~$X$  the length of the shortest arc which they bound in this circle. It  is easy to see that all non-void subsets of~$X$  are strings except~$X$ itself. The string space  of~$X$ is the boundary of a 3-simplex, i.e.,  a topological (and piecewise linear) 2-sphere.


\subsection{Remark} For any pair of distinct points $x,y$ of a metric space $(X,d)$, we can consider  a subcomplex of  the string complex $\vert X,d\vert$ formed by  simplexes corresponding to  strings  with endpoints $x,y$. Homology of this subcomplex yields  interesting algebraic data
associated with the metric space  $(X,d)$  and its points $x,y$. 
 
 \section{Two-dimensional string spaces}\label{dim2} 

 What topological spaces  arise as the string spaces of  finite metric spaces? By  definition, such a topological space   has a finite triangulation whose all vertexes and edges are vertexes and edges of   2-simplexes. The author does not know whether this condition is sufficient (conjecturally, not). The next theorem - our main result - establishes sufficiency of this condition  for 2-dimensional spaces.

 \begin{theor}\label{thm1}  Let $T$ be  a topological space  which has a finite 2-dimensional  triangulation whose every vertex is a vertex of a 2-simplex and  every edge is an edge of a 2-simplex. Then there exists a finite metric space  whose string space 
  is homeomorphic to~$T$. \end{theor}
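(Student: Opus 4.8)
The plan is to realize, instead of the given triangulation $K$ of $T$, its \emph{stellar subdivision} $K'$ obtained by inserting a new vertex $m_f$ in the interior of each $2$-simplex $f=\{a,b,c\}$ and replacing $f$ by the three triangles $\{m_f,a,b\}$, $\{m_f,b,c\}$, $\{m_f,a,c\}$, while keeping all edges and vertices of $K$. Since $K'$ is a subdivision of $K$, its geometric realization is homeomorphic to $T$; so it suffices to produce a finite metric space $(X,d)$ whose string complex is isomorphic, as an abstract simplicial complex, to $K'$. I take $X$ to be the set of all vertices of $K$ together with all the barycenters $m_f$.

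The metric is designed to take values only in the interval $[1,2]$. Concretely I set $d(m_f,v)=1$ for every vertex $v$ of $f$ (the ``spokes''), $d(u,v)=2$ for every edge $\{u,v\}$ of $K$, and $d=3/2$ for every remaining pair of distinct points. The decisive feature of this choice is that, because all pairwise distances lie in $[1,2]$, the triangle inequality is automatic: each of the three distances in any triple is $\le 2$, while the sum of the other two is $\ge 1+1=2$. Thus $(X,d)$ is a genuine metric space with no computation required, and I am free to concentrate on identifying its strings.

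The confinement of distances to $[1,2]$ also pins down the strings. If $x_1,\dots,x_n$ is an ordered string then $d(x_1,x_n)=\sum_i d(x_i,x_{i+1})\ge n-1$; since the left side is $\le 2$, this forces $n\le 3$, so there are no strings of length $\ge 4$. For a length-$3$ string with middle $y$ the equality $d(x,z)=d(x,y)+d(y,z)$ with all terms in $[1,2]$ forces $d(x,y)=d(y,z)=1$ and $d(x,z)=2$. The only pairs at distance $1$ are spokes, so $y$ has two distance-$1$ neighbours only when either (i) $y=m_f$ and $x,z$ are two vertices of $f$, or (ii) $y$ is a vertex of $K$ and $x,z$ are barycenters $m_f,m_{f'}$ of two $2$-simplices containing $y$. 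In case (i) the endpoints automatically satisfy $d(x,z)=2$, since they span an edge of $K$, giving exactly the three triangles of $K'$ arising from $f$; in case (ii) one has $d(x,z)=3/2\ne 2$, so no string arises. Hence the length-$3$ strings are precisely the $2$-simplices of $K'$, and by Lemma~\ref{le+1} the length-$2$ and length-$1$ strings are precisely their edges and vertices. This identifies the string complex with $K'$.

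The step that needs the most care — and the one where the hypotheses on $T$ are consumed — is checking that nothing is lost or gained in this identification. Every vertex $v$ of $K$ must actually lie on a string: this holds because $v$ belongs to some $2$-simplex $f$, so $\{m_f,v\}$ sits inside the string coming from $f$, and it is exactly the assumption that every vertex is a vertex of a $2$-simplex that guarantees this. Likewise every edge $\{a,b\}$ of $K$ must be a string, which forces it to lie in some triangle $\{m_f,a,b\}$; this is ensured by the assumption that every edge is an edge of a $2$-simplex. I expect the main obstacle to be precisely this two-sided bookkeeping: simultaneously verifying that all intended simplices are strings and that no spurious strings (longer strings, or extra triangles of type (ii)) occur. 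The normalization of all distances to $[1,2]$ is what makes both directions transparent and lets the single value $3/2$ kill the only dangerous coincidence.
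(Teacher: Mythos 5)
Your proof is correct, but it takes a genuinely different route from the paper's. The paper takes $X$ to be the set of \emph{all} simplexes of the triangulation $\tau$ (vertices, edges, and triangles), chooses distances $u,v,u+v,k$ with $u,v\in(k/2,k)$ so that the only length-3 strings are the incidence flags $x\subset y\subset z$ with $\dim x=0$, $\dim y=1$, $\dim z=2$, and thereby realizes the \emph{barycentric} subdivision of $\tau$; verifying the triangle inequality there requires a several-case analysis. You instead take only the vertices of $K$ together with one barycenter per 2-simplex, and realize the \emph{stellar} subdivision of the 2-faces. Your normalization of all distances into $[1,2]$ is a real simplification: the triangle inequality becomes automatic (sum of any two distances is $\ge 2\ge$ any single one), the bound $d(x_1,x_n)\ge n-1$ instantly kills strings of length $\ge 4$, and the equality case forces the $1+1=2$ pattern, so the only point needing care is ruling out the triples $\{m_f,v,m_{f'}\}$, which your value $3/2$ does. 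Your accounting of where the hypotheses enter (every vertex in a 2-simplex, every edge in a 2-simplex) is exactly right, and Lemma~\ref{le+1} correctly supplies the length-1 and length-2 strings. A side benefit of your construction: it uses only $V+F$ points rather than the paper's $V+E+F$, so it improves the upper bound on the minimal number $n(T)$ discussed after Corollary~\ref{cthm1} (e.g.\ $8$ rather than $14$ for the minimal triangulation of $S^2$, though still short of the optimal $n(S^2)=4$).
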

  
\begin{proof}  We first pick any real numbers $k>0$ and  $u,v\in (k/2, k)$.  Pick a triangulation~$\tau$ of~$T$ as in the assumptions of the theorem. Let~$X$ be the  set of simplexes of~$\tau$.   Clearly, there is a unique function $d:X\times X\to \Bbb R$ such that:

 $-$ \, $d$ is symmetric, i.e., $d(x,y)=d(y,x)$ for all $x,y\in X$;

  $-$ \,  $d(x,x)=0$ for all $x\in X$;

  $-$ \, $d(x,y)=k$ for all  $x, y\in X$ with $x\not \subset y$ (i.e., $x$ not a  face of~$y$) and
$y\not \subset x$;

  $-$ \, $d(x,y)=u$ for any 1-simplex $y\in X$ and any vertex~$x$ of~$y$; 

  $-$ \, $d(x,y)=v$ for any 2-simplex $y\in X$ and any edge~$x$ of~$y$;

  $-$ \, $d(x,y)=u+v$ for any 2-simplex $y\in X$ and  any vertex~$x$ of~$y$.

The last three conditions imply that the restriction of~$d$ to any subset of~$ X$ consisting of three simplexes $x\subset y \subset z $ of dimensions $0,1, 2$ respectively   is isometric to the  restriction of the standard metric in~$\Bbb R$ to the set $\{0,u,u+v\}\subset \Bbb R$.

The function~$d$ obviously satisfies all axioms of a metric except possibly the triangle inequality which needs to be verified. We check now that $d(x,y)+d(y,z)\geq d(x,z)$ for  any $x,y,z\in X$. If $x=y$ or $x=z$ or $y=z$, then this inequality holds because $d\geq 0$ and $d(x,x)=0$ for all~$x$.  Assume from now on that $x,y,z$  are three different simplexes of the triangulation~$\tau$. If  $x\subset y$ (i.e., if $x$ is a face of~$y$) and $y \subset z$ then necessarily $\dim(x)=0, \dim (y)=1, \dim (z)=2$ and 
 $$d(x,y)=u,  \,\, d(y,z)=v, \,\,d(x,z) =u+v.$$ In this case the triangle inequality  is an equality.
 The same is true if $x\supset y \supset z$. 
 
 We claim  that in all other cases $d(x,y)+d(y,z)> d(x,z)$.  
The symmetry of~$d$ ensures that exchanging~$x$ and~$z$  we get an equivalent inequality. Therefore it is enough to treat the case $\dim(x) \leq \dim(z)$ and  $x\neq y\neq z\neq x$.  By the definition of~$d$,  the numbers  $d(x,y),  d(y,z), d(x,z)$
belong to the set   $\{u,v,u+v, 
k\}$.   If~$x$ is not a face of~$z$, then $d(x,z)=k$. We  have
$d(x,y)+d(y,z)> k $ because in the set $\{u,v,u+v, 
k\}$ the sum of any two terms is strictly bigger than~$k$ (as  $u,v>k/2$). For the rest of the argument we assume that~$x$ is a face of~$z$.
We have several cases to consider.
 
 If~$x$ is a vertex of a 1-simplex~$z$, then the sum of any two terms 
 in the list $\{u,v,u+v, 
k\}$ being strictly bigger than~$k$   is also strictly bigger than $d(x,z)=u$. If~$x$ is an edge of a 2-simplex~$z$, then, similarly,  the sum of any two terms 
 in the set $\{u,v,u+v, 
k\}$ is strictly bigger than $d(x,z)=v$.
Suppose now  that~$x$ is a vertex of  a 2-simplex~$z$. The case $x\subset y\subset z$ being already considered, we  suppose that $x\not\subset y$ or $y\not \subset z$ or both.
If $x\not\subset y\subset z$, then either~$y$ is a vertex of~$z$ different from~$x$ or~$y$ is an edge of~$z$ opposite to~$x$. In the first case
$$d(x,y)+d(y,z)=k+u+v>u+v=d(x,z).$$ In the second case $$d(x,y)+d(y,z)=k+v>u+v=d(x,z).$$
If $x\subset y\not \subset z$, then  either $\dim(y)=1$ and then $$d(x,y)+d(y,z)=u+k>u+v=d(x,z)$$ or $\dim (y)=2$ and then $$d(x,y)+d(y,z)=u+v+k>u+v=d(x,z).$$ 
 If $x\not\subset y\not \subset z$, then $$d(x,y)+d(y,z)=2k>u+v=d(x,z).$$

We conclude that $(X, d)$ is a metric space. Moreover, the arguments above show  that the only strings of length~3 in~$X$ are the triples consisting of a 2-simplex of~$\tau$, an edge of this 2-simplex, and a vertex of this edge. The direct order in such a string holds the edge  as the middle term of the triple. This easily  implies that  there are no strings of length $>3$ in~$X$.  The triples  as above bijectively correspond to the 2-simplexes of the barycentric subdivision of~$\tau$. The string space $\vert X,d\vert$ is obtained by gluing  the 2-simplexes  numerated by those triples. These gluings are the same as the gluings of the 2-simplexes in the barycentric subdivision of~$\tau$. Consequently, the space  $\vert X,d\vert$ is homeomorphic to the  space~$T$. Under this homeomorphism the natural triangulation of $\vert X, d \vert$ corresponds to the barycentric subdivision of~$\tau$.  \end{proof} 

 \begin{corol}\label{cthm1}  Every compact surface (possibly, non-connected and/or with boundary)  is homeomorphic to the string space of a finite metric space. \end{corol}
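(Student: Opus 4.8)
The plan is to deduce the corollary directly from Theorem~\ref{thm1} by exhibiting, for any compact surface, a finite triangulation that satisfies the hypotheses of that theorem. The only substantive input is the classical triangulability of surfaces; everything else is bookkeeping.

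First I would invoke Rad\'o's theorem: every compact surface~$T$ (connected or not, with or without boundary) admits a triangulation, and compactness forces this triangulation to be finite. For a disconnected surface one simply takes the disjoint union of triangulations of its finitely many components, and for a surface with boundary the standard triangulation applies without change.

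Next I would verify that such a triangulation~$\tau$ meets the two combinatorial conditions of Theorem~\ref{thm1}, namely that it is $2$-dimensional and that every vertex and every edge of~$\tau$ is a face of some $2$-simplex. Both follow from the fact that~$T$ is a $2$-manifold. Indeed, the link of an interior vertex is a circle and the link of a boundary vertex is an arc, so in either case the vertex is surrounded by triangles; and any edge lies in exactly two triangles if it is interior and in exactly one triangle if it lies on the boundary. Hence~$\tau$ has no isolated vertices or dangling edges, and its top-dimensional simplexes are precisely the $2$-simplexes. Feeding~$\tau$ into Theorem~\ref{thm1} then produces a finite metric space whose string space is homeomorphic to~$T$, which is the assertion of the corollary.

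I do not expect any genuine obstacle: the whole content resides in Theorem~\ref{thm1}, and the corollary is essentially the observation that a triangulation of a surface is automatically \emph{pure} of dimension~$2$ in the sense required. The single point deserving a word of care is the boundary case, where a boundary edge lies in a single triangle rather than two; this still satisfies the hypothesis that every edge be an edge of a $2$-simplex, so it causes no difficulty.
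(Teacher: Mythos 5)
Your argument is correct and is exactly the route the paper intends: the corollary is stated as an immediate consequence of Theorem~\ref{thm1}, relying on the fact that every compact surface admits a finite triangulation in which every vertex and every edge is a face of a $2$-simplex. Your verification of the purity condition via links of vertices and the one-or-two-triangles property of edges is a correct (and slightly more explicit) account of what the paper leaves unsaid.
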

 
 It is interesting to compute for every compact surface~$T$ the minimal number $n(T)$  of points in a metric space whose string space  is homeomorphic to~$T$. The proof of Theorem~\ref{thm1} shows that $n(T)$ is smaller than or equal to the minimal total number of simplexes in a triangulation of~$T$. This estimate seems to be rather weak. For example, the minimal triangulation of the 2-sphere $S^2$
 has  $14$ simplexes:~4 of  dimension zero,~6 of dimension one, and~4 of dimension~ two. At the same time,  Example~\ref{exa}.5 shows that $n(S^2)\leq 4$. It is straightforward to prove that $n(S^2)=4$.
 
 \section{$\varepsilon$-strings and $\varepsilon$-string spaces}
  
 
In this section we let~$\varepsilon$ be any non-negative real number. In generalization of strings and string spaces we introduce $\varepsilon$-strings and $\varepsilon$-string spaces.
  
  \subsection{$\varepsilon$-strings} For~$n\geq 3$, an  \emph{$\varepsilon$-string of  length~$n$}  in a gap space $(X,d)$ is an $n$-element  set $S \subset X$ whose elements   can be ordered  $x_1, x_2, \ldots, x_{n}$ so that   
 \begin{equation}\label{wo541}  d(x_1, x_{n})+\varepsilon \geq d(x_1, x_{2})+d(x_2, x_{3})+\cdots+d(x_{n-1}, x_{n}) . \end{equation}
 Any order  in~$S$ satisfying  \eqref{wo541} is said to be \emph{direct}. An $\varepsilon$-string endowed with a direct order is  \emph{ordered}.
These notions extend to   1-element and 2-element subsets of~$X$ as in Section~\ref{strings}.
 For $x\in X$,  a 
 $1$-element set $\{x\} $ is  a \emph{$\varepsilon$-string of  length~$1$}  
if~$x$ belongs to  an $\varepsilon$-string of length~3. The only order in the set $\{x\} $ is direct.
A  $2$-element subset of~$ X$  is an \emph{$\varepsilon$-string of  length~$2$}  
if it is contained (as a subset) in an  $\varepsilon$-string of length~3.  An order in an $\varepsilon$-string of length~2 is  \emph{direct} if it extends to a direct order in an $\varepsilon$-string of length~3. 

Note that the right-hand side of Formula~\eqref{wo541}  is always greater than or equal to $ d(x_1, x_{n})$. Therefore for $\varepsilon=0$, this formula is equivalent to \eqref{wo35} and we recover the notion of a string from Section~\ref{strings}.

\begin{lemma}\label{le+141}  All non-void subsets of an $\varepsilon$-string are  $\varepsilon$-strings (called  $\varepsilon$-substrings). A direct order in an $\varepsilon$-string restricts to a direct order in each $\varepsilon$-substring. \end{lemma}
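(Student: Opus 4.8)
The plan is to adapt the proof of Lemma~\ref{le+1} to the $\varepsilon$-relaxed setting. As before, it suffices to prove both claims for an $\varepsilon$-string $S=\{x_1,\dots,x_n\}$ of length $n\geq 4$ with direct order $x_1,\dots,x_n$, and to show that every $(n-1)$-element subset is an $\varepsilon$-string inheriting the restricted order; the general case then follows by induction on the size of the subset, exactly as in Lemma~\ref{le+1}. The cases $n=1,2,3$ are immediate from the definitions. The key point is that the inequality $d(x_1,x_n)+\varepsilon \geq \sum_{i=1}^{n-1} d(x_i,x_{i+1})$ of \eqref{wo541}, combined with the fact (noted after \eqref{wo541}) that the right-hand side always dominates $d(x_1,x_n)$, gives us a single quantity of ``slack'' at most $\varepsilon$ to distribute, and any subsum of consecutive gaps can exceed its shortcut by no more than this total slack.

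Concretely, to handle the subset $\{x_1,\dots,x_{n-1}\}$, I would write the chain $\sum_{i=1}^{n-2} d(x_i,x_{i+1})$ and bound it above. Using the $(n-1)$-gon inequality and the triangle inequality as in \eqref{wo309}, one has $d(x_1,x_{n-1})+d(x_{n-1},x_n)\geq d(x_1,x_n)$, so $d(x_1,x_{n-1}) \geq d(x_1,x_n)-d(x_{n-1},x_n)$. Then
\begin{equation}\label{epsclaim}
\sum_{i=1}^{n-2} d(x_i,x_{i+1}) = \Big(\sum_{i=1}^{n-1} d(x_i,x_{i+1})\Big) - d(x_{n-1},x_n) \leq d(x_1,x_n)+\varepsilon - d(x_{n-1},x_n) \leq d(x_1,x_{n-1})+\varepsilon,
\end{equation}
where the first inequality is \eqref{wo541} and the last uses the triangle inequality $d(x_1,x_n)\leq d(x_1,x_{n-1})+d(x_{n-1},x_n)$. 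This is precisely the defining inequality \eqref{wo541} for the ordered set $x_1,\dots,x_{n-1}$, so it is an $\varepsilon$-string with the restricted direct order. The symmetric computation, stripping $x_1$ instead of $x_n$ and using $d(x_1,x_n)\leq d(x_1,x_2)+d(x_2,x_n)$, shows $\{x_2,\dots,x_n\}$ is an ordered $\varepsilon$-string.

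For an interior deletion, pick $j\in\{2,\dots,n-1\}$ and consider $\{x_1,\dots,x_{j-1},x_{j+1},\dots,x_n\}$ with the induced order. Following the three-line display in Lemma~\ref{le+1}, I replace the consecutive pair $d(x_{j-1},x_j)+d(x_j,x_{j+1})$ by the single gap $d(x_{j-1},x_{j+1})$, which by the triangle inequality satisfies $d(x_{j-1},x_{j+1})\leq d(x_{j-1},x_j)+d(x_j,x_{j+1})$. Hence the chain sum along the shortened sequence is at most $\sum_{i=1}^{n-1} d(x_i,x_{i+1})$, which by \eqref{wo541} is at most $d(x_1,x_n)+\varepsilon$, giving the required bound for the $(n-1)$-element subset. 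I do not expect a genuine obstacle here: the only subtlety, and the point to state carefully, is that in this relaxed setting the ``slack'' $\varepsilon$ is a fixed additive budget that is never exceeded when we pass to a subsum, because deleting points can only decrease the left-hand side of \eqref{wo541} (via the triangle inequality) while the right-hand target $d(x_1,x_n)$ is itself bounded above by the corresponding endpoint gap of the substring. Collecting these three cases and inducting downward on the number of elements completes the proof.
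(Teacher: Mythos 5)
Your proof is correct and follows essentially the same route as the paper's: in each of the three cases (dropping $x_n$, dropping $x_1$, dropping an interior $x_j$) you combine the defining inequality \eqref{wo541} with a single application of the triangle inequality to shorten the chain, then induct downward on the size of the subset. The only differences are cosmetic (you subtract $d(x_{n-1},x_n)$ where the paper cancels it from a chain of inequalities), plus a harmless mislabel where you invoke the $(n-1)$-gon inequality for what is just the triangle inequality $d(x_1,x_{n-1})+d(x_{n-1},x_n)\geq d(x_1,x_n)$.
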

  
\begin{proof}  Let  $(X,d)$ be a gap space. Let $S\subset X$ be an $\varepsilon$-string of length~$n$ with direct order  $S=\{x_1,..., x_n\}$.   For $n=1,2,3$,  both claims of the lemma concerning the subsets of~$S$ follow directly from the definitions.   Assume   that $n\geq 4$. 
 We have  \begin{equation}\label{wo39} d(x_1, x_{n-1})+ d(x_{n-1},  x_n) +\varepsilon \geq d(x_1, x_{n})+\varepsilon \geq   \end{equation}
 $$ \geq d(x_1, x_{2})+d(x_2, x_{3})+\cdots+d(x_{n-2}, x_{n-1})+d(x_{n-1}, x_{n})
$$
 where we use  the triangle inequality and the definition of an $\varepsilon$-string.  Cancelling $d(x_{n-1},  x_n) $, we get
 $$d(x_1, x_{n-1})+ \varepsilon  \geq d(x_1, x_{2})+d(x_2, x_{3})+\cdots+d(x_{n-2}, x_{n-1}) .$$ Thus,  the set  $\{x_1,..., x_{n-1}\}$  is an $\varepsilon$-string with direct order  $x_1,..., x_{n-1}$.      Similarly,  $$d(x_1, x_{2})+d(x_2, x_{n})+ \varepsilon \geq d(x_1, x_{n})+ \varepsilon \geq$$
 $$\geq 
 d(x_1, x_{2})+d(x_2, x_{3})+\cdots+d(x_{n-1}, x_{n}). $$ Cancelling $d(x_1, x_2)$ we deduce that the  set $\{x_2,..., x_n\}$ is  an $\varepsilon$-string with direct order  $x_2,..., x_n$.     Now, pick any $j\in \{2,..., n-1\}$. We have 
 $$d(x_1, x_{n})+ \varepsilon \geq d(x_1, x_{2})+d(x_2, x_{3})+\cdots+d(x_{n-1}, x_{n})\geq $$
 $$ = \sum_{k=1}^{j-2} d(x_k, x_{k+1})+d(x_{j-1}, x_{j})
 +d( x_{j}, x_{j+1})+\sum_{k=j+1}^{n-1} d( x_{k}, x_{k+1}) \geq 
 $$
 $$\geq   \sum_{k=1}^{j-2} d(x_k, x_{k+1})+d(x_{j-1},  x_{j+1}) +\sum_{k=j+1}^{n-1} d( x_{k}, x_{k+1} ) $$
 where we  use the the definition of an $\varepsilon$-string and the triangle inequality. Therefore  the set   $\{x_1,...,x_{j-1}, x_{j+1},..., x_n\}$ is an $\varepsilon$-string with direct order  $x_1,...,x_{j-1}$, $ x_{j+1},..., x_n$.    This proves
 both claims of the lemma for any subset  of~$S$ having  $n-1$ elements. By induction,
 we obtain both claims for all non-void subsets of~$S$. 
\end{proof} 

  \subsection{The $\varepsilon$-string space}  By Lemma~\ref{le+141},  the collection of $\varepsilon$-strings in a gap space $(X,d)$ is an abstract simplicial complex. We  denote its geometric realization  by $\vert X,d \vert_\varepsilon$.  
This is a topological space glued from simplexes numerated by $\varepsilon$-strings in $(X,d)$. 
The simplexes corresponding to smaller $\varepsilon$-strings are faces of  the simplexes corresponding to bigger $\varepsilon$-strings. We call $\vert X,d\vert_\varepsilon$ the \emph{$\varepsilon$-string space} of  $(X,d)$.
 By the very definition, the space  $\vert X,d\vert_\varepsilon$ 
 admits a triangulation whose all vertexes and  edges are faces of certain 2-simplexes. For $\varepsilon=0$, we obtain the same space as in Section~\ref{com},  i.e., 
 $\vert X,d \vert_0=\vert X,d \vert$.
 
 For any real numbers  $\varepsilon'\geq \varepsilon\geq 0$, each $\varepsilon$-string in $(X,d)$ is also an $\varepsilon'$-string in $(X,d)$. This induces a simplicial embedding $i_{\varepsilon, \varepsilon'}:\vert X,d \vert_{\varepsilon}\hookrightarrow
 \vert X,d \vert_{\varepsilon'}$. Clearly, this construction is functorial, that is $i_{\varepsilon, \varepsilon}=\id$ and $i_{\varepsilon', \varepsilon''}\circ i_{\varepsilon, \varepsilon'}=i_{\varepsilon, \varepsilon''}$ for all $\varepsilon''\geq \varepsilon'\geq \varepsilon\geq 0$.
  In  terminology of topological data analysis (TDA),   we have got a filtered system of simplicial complexes $\{ \vert X,d \vert_\varepsilon\}_\varepsilon$. The standard methods of TDA produce  persistent homology and the barcode
of this filtered system.
 
  Given $\varepsilon\geq 0$, for any $\varepsilon'\geq \varepsilon$ sufficiently close to~$\varepsilon$, all $\varepsilon'$-strings in $(X,d)$ are $\varepsilon$-strings and the mapping  $i_{\varepsilon, \varepsilon'}$ is a homeomorphism. In particular, for $\varepsilon=0$ and sufficiently small $\varepsilon'$, we have a homeomorphism $i_{0, \varepsilon'}: \vert X,d \vert\approx \vert X,d \vert_{\varepsilon'}$. For sufficiently big~$\varepsilon$, all non-void subsets of~$X$ are $ \varepsilon$-strings, so  $ \vert X,d \vert_\varepsilon$ is a simplex with ${\rm{card}} (X)$ vertexes.

\end{document}